\documentclass[letterpaper, 10 pt, conference]{ieeeconf}
\IEEEoverridecommandlockouts                              

\let\labelindent\relax
\usepackage{amsmath} 
\usepackage{amssymb}  

\usepackage{ bbold }
\usepackage{tabulary}
\usepackage{hyperref}
\usepackage[english]{babel}
\usepackage{blindtext}
\usepackage[font=footnotesize]{caption}
\usepackage[makeroom]{cancel}
\usepackage{amsfonts}
\usepackage{subfig,pgfplots}
\usepackage{amsthm}  
\usepackage{amssymb}
\usepackage{graphicx}
\usepackage{dsfont}
\usepackage{pgf,tikz}\usepackage{algorithm2e}
\RestyleAlgo{ruled}
\usepackage{algpseudocode}
\pgfplotsset{ 
	compat=newest, 
	legend style =
	{font=\footnotesize },
	label style = {font=\footnotesize},
	every tick label/.append style={font=\footnotesize}
}
\usepackage{tabularx}
\usepackage{float}
\usepackage{cite}
\usepgfplotslibrary{fillbetween}
\usetikzlibrary{shapes.multipart}
\usepackage{enumitem} 
\usepackage{bbm} 

\usepackage[strict]{changepage}

\newcommand{\R}{\mathbb{R}}
\newcommand{\mc}{\mathcal}

\newcommand{\ben}{\begin{equation*}}
	\newcommand{\een}{\end{equation*}}
\newcommand{\be}{\begin{equation}}
	\newcommand{\ee}{\end{equation}}
\newcommand{\ba}{\begin{array}}
	\newcommand{\ea}{\end{array}}

\newtheorem{theorem}{Theorem}
\newtheorem{lemma}{Lemma}
\newtheorem{assumption}{Assumption}

\newtheorem{definition}{Definition}
\newtheorem{corollary}{Corollary}
\newtheorem{proposition}{Proposition}

\newtheorem{problem}{Problem}
\newtheorem{example}{Example}

\allowdisplaybreaks
\title{A Convex Formulation of the Multi-Commodity Dynamic Traffic Assignment}
\author{Davide Sipione, Giacomo Como, Gustav Nilsson
	\thanks{D. Sipione and G. Como are with the Department of Mathematical Sciences, Politecnico di Torino, Torino, Italy (\texttt{\{davide.sipione,giacomo.como\}@ polito.it}). G. Como is also with the Department of Automatic Control, Lund University, Sweden. Gustav Nilsson is with the Univ. Grenoble Alpes, Inria, CNRS, Grenoble INP, GIPSA-Lab, Grenoble, France (\texttt{gustav.nilsson@inria.fr}). This work was partially supported by the Research Project PRIN 2022 ``Extracting Essential Information and Dynamics from Complex Networks'' (Grant Agreement number 2022MBC2EZ) funded by the Italian Ministry of University and Research.}}
\begin{document}
	\maketitle
\begin{abstract}

	We consider a multi-commodity Dynamic Traffic Assignment (DTA) problem formulated as a network flow control problem on the Cell Transmission Model (CTM). The objective is to design optimal control policies using variable speed limits, ramp metering, and dynamic routing to regulate traffic evolution over time on a given limited-capacity transportation network. Even simple instances of DTA problems on the CTM are known to give rise to non-convex optimal control formulations. Nevertheless, a single-commodity DTA formulation has recently been proposed that admits a tight convex relaxation, thereby enabling tractable optimal control synthesis. The single-commodity formulation, however, is structurally restrictive, as it effectively allows only a single destination. To address this limitation, we develop a multi-commodity CTM model in which each commodity is associated with potentially distinct sets of off-ramps. By extending the convexification approach developed for the single-commodity case, we establish a tight convex relaxation of the multi-commodity DTA problem on the CTM model. This relaxation relies on concave, commodity-specific demand functions and concave aggregate supply functions for every cell, which ensure convexity of the resulting optimal control problem. Our proposed formulation requires commodity-dependent implementation of variable speed limits and dynamic routing policies.
    
\end{abstract}

\section{Introduction}
The Dynamic Traffic Assignment (DTA) problem, first introduced in \cite{Merchant1978}, has become a cornerstone of transportation research~\cite{Peeta:2001}. While originally proposed for planning purposes, with the technological development, it has increasingly been leveraged in the context of optimal real-time feedback control~\cite{Gomes:2006}.

A macroscopic, first-order traffic model suitable for addressing the DTA problem is the celebrated Cell Transmission Model (CTM)~\cite{DAG2}. 
Straightforward formulations of the DTA problem with CTM dynamics are known to yield non-convex formulations, which is mainly caused by congestion effects~\cite{Non-Conv}. However, in~\cite{Zili:2000} a linear program is obtained by relaxing the supply and demand constraints of the CTM. Then, in~\cite{Mura2012}, a similar relaxation is found for the link-node cell transmission model by design of variable speed limits and ramp metering, when the demand functions are linear and the supply functions are affine. These results are further extended in~\cite{ConvAndRob} where a tight and convex relaxation of the DTA problem is formulated, for arbitrary network topology, arbitrary concave demand and supply functions, and convex cost functions.

However, none of these studies account for the inherent heterogeneity of traffic flows. As autonomous vehicles become increasingly more common and with the ever-growing traffic demand, a shift in how freeway networks are managed and optimized has become necessary. Several studies have attempted to solve a multi-class/multi-commodity DTA problem. In fact, in~\cite{SO-DTA} the population is split into controllable and selfish agents. Intuitively, the control action can be applied only to the former. The resulting nonlinear optimal control problem is then solved using the discrete adjoint method and a multi-start approach. Similarly, in~\cite{SO-DTA2,SO-DTA3} the population is again split into the sets of compliant and non-compliant users, where the former follow the imposed routing blindly, and the latter employ a logit choice function at diverge junctions. However, these studies lack theoretical optimality guarantees. 

In this paper, we employ a macroscopic model for heterogeneous traffic flows, based on the CTM. We focus on dynamical multi-commodity flow networks, modeled as dynamical systems led by mass conservation laws on directed multigraphs. 
The model is then leveraged to improve traffic performance through optimal control techniques. Specifically, we formulate a Multi-Commodity Dynamic Traffic Assignment (MC-DTA) problem where the control action ---variable speed limits, ramp metering, and dynamical routing--- can be applied separately to every commodity. As mentioned before, straightforward formulations of the MC-DTA problem lead to non-convex instances. To tackle this issue, we extend single-commodity results~\cite{ConvAndRob}, to provide a tight and convex relaxation of the Dynamic Traffic Assignment in a multi-commodity setting, by assuming both concavity of the demand and supply functions and
convexity of the cost function. Hence, we adapt the approach in \cite{FNC}, where the control is actuated only through variable speed limits and ramp metering. In this paper dynamical routing becomes part of the control action and, moreover, we draw insights from Pontryagin's Maximum Principle for the optimality of the proposed solution. Furthermore, it is possible to solve the proposed MC-DTA employing available convex optimization solvers,
such as CVX~\cite{cvx}.

The rest of the paper is organized as follows. The dynamical multi-commodity flow network model is introduced in Section~\ref{sec:model}. In Section~\ref{sec:control}, we formalize the Multi-Commodity Dynamic Traffic Assignment problem and provide a tight convex relaxation. In Section~\ref{sec:example} we present some examples.

\textbf{Notation}
The sets $\mathbb{R}$, $\mathbb{R}_+$, and $\mathbb{R}_{++}$ represent the sets of real, non-negative real, and positive real numbers, respectively. For non-empty finite sets $\mathcal{A}$ and $\mathcal{B}$, $\mathbb{R}^\mathcal{A}$ is the $|\mc A|$-dimensional of real vectors and  $\mathbb{R}^{\mathcal{A} \times \mathcal{B}}$ is the set of $|\mc A|\times|\mc B|$ real matrices, whose entries are indexed by elements of $\mc{A}$ and $\mathcal{A}$ and $\mathcal{B}$, respectively.  

\section{A Controlled Dynamical Multi-Commodity CTM-Based Flow Network Model}\label{sec:model}
In this section, we introduce a macroscopic model for dynamic multi-commodity traffic flows on directed multigraphs, borrowing insights from the celebrated Cell Transmission Model (CTM)~\cite{DAG2}.

\subsection{Multi-Commodity Flow Network Model}
We model a transportation network as a finite directed multigraph $\mc G = (\mc V, \mc E)$, where $\mc V$ and $\mc E$ denote the set of nodes and edges, respectively. From a physical point of view, edges ---referred to as \emph{cells}--- represent sections of roads, while nodes correspond to junctions that connect subsequent cells. 
For every cell $i$ in $\mc{E}$, let $\sigma_i,\tau_i$ in $\mc{V} \cup \{w\}$ denote its tail and head node, respectively. Here $w$ is a virtual node representing the external world. We assume that there are no self-loops, i.e., $\sigma_i \neq \tau_i$ for every cell $i$ in $\mc{E}$.
The sets of on-ramps and off-ramps are denoted by $\mc O=\{i\in\mc E:\sigma_i=w\}$ and $\mc S=\{i\in\mc E:\tau_i=w\}$, respectively. 
Moreover, we denote by $\mc{A} = \left\{(i,j) \in \mc{E} \times \mc{E} : \tau_i = \sigma_j \neq w\right\}$ the set of adjacent pairs of cells. 


The network is shared by a nonempty finite set of commodities~$\mc K$, who may be restricted to different ~subgraphs of $\mc G$. To every commodity $k$ in $\mc{K}$ we associate a set $\mc{E}_k\subseteq\mc E$ of utilizable cells. We assume that, for every commodity $k$ in $\mc K$, the sets of utilizable on-ramps $\mc{O}_k=\mc{O}\cap\mc E_k$ and off-ramps $\mc{S}_k=\mc{S}\cap\mc E_k$ are nonempty and that every utilizable cell $i$ in $\mc{E}_k$ belongs to a path from some on-ramp $o$ in $\mc{O}_k$ to some off-ramp $r$ in $\mc{S}_k$.
To each commodity is associated its own set of adjacent pairs of cells $\mc{A}_k$, i.e., $\mc{A}_k = \left\{(i,j) \in \mc{E}_k \times \mc{E}_k: \tau_i = \sigma_j \neq w\right\}$. 

%
The state space of the system is 
$$\mc X:=\left\{x\in\R_+^{\mc E \times \mc K}:\,x_{ik}=0\;,\; \forall i\in\mc E\setminus\mc E_k\right\}\,.$$
Every entry $x_{i}^{(k)}$ of a state $x$ in $\mc X$ represents the traffic volume of commodity $k$ on cell $i$. 
Borrowing from the fundamental diagram of traffic flows, two families of functions are introduced. The maximum possible outflow $d_i^{(k)}(x_i^{(k)})$ from a cell $i$ depends on the traffic volume $x_i^{(k)}$ through a demand function $d_i^{(k)} : \R_+ \rightarrow \R_+$. The maximum possible inflow $s_i(\sum_{k \in \mc{K}}{w_i^{(k)}x_i^{(k)}})$ into a cell $i$ depends on a weighted sum of all traffic volumes in cell $i$ through a shared supply function $s_i: \R_+ \rightarrow \R_+$.
Note that $w_i^{(k)} > 0$ is a positive weight for the traffic volume in cell $i$ that is allowed to be dependent on the commodity, in case different commodities will occupy different amount of space in the cell. 
\begin{assumption}\label{ass:1}
	\begin{enumerate}
		\item[(i)] The supply function of every non-onramp cell $i$ in $\mc E \setminus \mc O$ is continuously differentiable, concave, and non-increasing.
        
	\item[(ii)] The demand function of every cell $i$ in $\mc{E}_k$ and commodity $k$ in $\mc{K}$ is continuously differentiable, concave, non-decreasing, and such that $d_i^{(k)}(0) = 0$ for every $k$ in $\mc{K}$ and $i$ in $\mc{E}_k$.
	\end{enumerate}
\end{assumption}

\begin{example}\label{ex:1}
Linear demand and affine supply functions $$d_i^{(k)}(\xi) = \min \left\{\frac{v_i^{(k)}}{L_i}\xi, q_i^{(k)}\right\}\,,\quad s_i(\xi) = \max\{0,C_i - w_i\xi\}\,,$$ where $v_i \geq 0$ is the speed at which flow travels through the cell, $L_i > 0$ is the length of the cell, $q_i^{(k)} > 0$ is the maximum out-flow of type-k flow from the cell, $C_i$ is the capacity of the cell, and $w_i >0$ the congestion wave speed. 
\end{example}

\subsection{Controlled FIFO Dynamical Multi-Commodity Flow Networks}

The flow entering the network is modeled as an exogenous time-varying inflow $\lambda_i^{(k)}(t) \geq 0$ at each on-ramp $i$ in $\mc O_k$. For every cell $i$ in $\mc E_k \setminus \mc O_k$ the exogenous inflow is identically set to 0, i.e., $\lambda_i^{(k)}(t) \equiv 0$. We denote by $\lambda^{(k)}(t)=(\lambda^{(k)}_i(t))_{i\in\mc E}$ the vector of exogenous inflows of commodity $k$ across all cells.

In this setting, control is actuated both through dynamical routing and variable speed limits combined with ramp metering. The former controls how type-$k$ flow splits at diverge junctions, while the latter limits the rate at which flow leaves a cell. 

Intuitively, routing must satisfy some physical constraints. To this end, let us call $M^{(k)}$ the routing matrix of ~commodity $k$, whose entries are $0 \leq M_{ij}^{(k)} \leq 1$, called \emph{turning ratios}, denote the fraction of flow of commodity $k$ routed from
~cell $i$ to cell $j$. Hence, every routing matrix $M^{(k)}$ must satisfy the following constraints:

\be\label{eq:sub}
\sum\limits_{j \in \mc{E}}{M_{ij}^{(k)}} = \begin{cases}
	1 & i \in \mc{E}_k \setminus \mc{S}_k \\
	0 & i \in \mc{S}_k
\end{cases}\; ;
\ee
\be\label{eq:zero}
M_{ij}^{(k)} = 0, \quad \forall (i,j) \in \mc{E} \times \mc{E} \setminus \mc{A}_k \;.
\ee

Equation \eqref{eq:sub} ensures that only off-ramps are allowed to send flow to the external world, while Equation \eqref{eq:zero} implies that flow can only travel through adjacent cells.

Hence, for every commodity $k$ in $\mc{K}$ we can define the set of routing matrices that satisfy these constraints as $\mc{R}^{(k)} = \left\{M^{(k)} \in \R_+^{\mc{E} \times \mc{E}}: \eqref{eq:sub},\eqref{eq:zero}\right\}$.
Moreover, we define the set of admissible variable speed limits and ramp metering as
$\Omega = [0,1]^{\mc{E}\times\mc{K}}$, 
since they limit the outflow of every commodity and cell. Before proceeding notice that the set $\Omega$ and $\mc{R}^{(k)}$ are both compact and convex.

In the following definition, we formally introduce the control actions. 
 
\begin{definition}
	The control actions considered are
	\begin{enumerate}
		\item[(i)] Dynamical routing, which is modeled through a family of piecewise continuous functions  $R^{(k)}: [0,T] \rightarrow \mc{R}^{(k)}$ for all $k \in \mc{K}$;
	
		\item[(ii)] Variable speed limits and ramp metering, which are also modeled through a piecewise continuous function $\alpha : [0,T] \rightarrow \Omega$.
	\end{enumerate}
\end{definition}
 
In our setting, variable speed limits and ramp metering decrease the maximum allowed outflow of commodity $k$ from cell $i$ from $d_i^{(k)}(x_i^{(k)})$ to $\alpha_i^{(k)}d_i^{(k)}(x_i^{(k)})$. It is important to note that this allows distinct control parameters to be assigned to each commodity, enabling separate control. 

The dynamical systems is driven by mass conservation laws, stating that the variation of commodity $k$ in $\mc{K}$ traffic volume in a cell $i$ in $\mc{E}$ is equal to the difference between the total inflow to and the total outflow from that cell, i.e.,
\be\label{eq:massCons}
	\dot{x}_{i}^{(k)}  =   \lambda_{i}^{(k)} + \sum\limits_{j \in \mathcal{E}^{(k)}}{f_{ji}^{(k)}(x)}  - z_{i}^{(k)}(x) \,,
\ee
where $f_{ji}^{(k)}(x)$ denotes the flow sent from cell $j$ to cell $i$ and $z_i^{(k)}(x)$ denotes the total outflow from cell $i$ as
\be\label{eq:zik}
		z_{i}^{(k)}(x)  =  \begin{cases}
		\sum\limits_{j \in \mathcal{E}^{(k)}}{f_{ij}^{(k)}(x)} & i \in \mathcal{E}^{(k)} \setminus \mathcal{S}^{(k)} \\
		\alpha_i^{(k)}d_{i}^{(k)}(x_{i}^{(k)}) & i \in \mathcal{S}^{(k)}\,,
	\end{cases}
\ee
both expressed as functions of the current state $x$. Let $\dot{x}^{(k)} = (\dot{x}_i^{(k)})_{i \in \mc{E}}$  and $ \dot{x} = (\dot{x}_i^{(k)})_{i \in \mc{E},k \in \mc{K}}$ denote the vector of commodity $k$ traffic variation and the matrix of traffic variation, respectively. 

The only step left is to model the relationship between the cell-to-cell flow $f_{ij}^{(k)}$ and the state $x$. In spite of this, we employ a First-In-First-Out (FIFO) allocation rule. Formally,
	\begin{equation}\label{eq:fifo}
		f_{ij}^{(k)}(x^{(k)}) = \gamma_i(x,\alpha,R)R_{ij}^{(k)}\alpha_i^{(k)}d_{i}^{(k)}(x_{i}^{(k)}) \; ,
	\end{equation}
	where $\gamma_{i} =\max \{  \tau \in [0,1] :    \eqref{eq:4.5}\}$, 
	\begin{multline}\label{eq:4.5}
		\tau \cdot \max_{j \in \mc E \,, R_{ij}^{(k)} >0} \sum_{k \in \mc K} \biggl (\sum_{h\in \mc E_k} R_{hj}^{(k)}\alpha_h^{(k)} d_h^{(k)}(x_h^{(k)}) \biggr)\\  \leq s_j\biggl(\sum\limits_{k \in \mc{K}} w_j^{(k)}x_j^{(k)}\biggr) \,.
	\end{multline}

Equation \eqref{eq:4.5} implies that in a diverge junction, if for a commodity $k$ in $\mc{K}$ a downstream cell $i$ in $\mc{E}_k$ is congested then the flow directed to every downstream cells in $\mc{E}_k$ is limited accordingly. Moreover, in ordinary junctions, the flow is split proportionally among commodities that share that cell when congestion downstream occurs.

Given these considerations, it is now possible to formally state the Controlled Dynamical Multi-Commodity Flow Network model. 
\begin{definition}
	Given a nonempty finite directed multigraph $\mc{G} = (\mc V, \mc E)$, supply functions $s_i$ and demand functions $d_i^{(k)}$ satisfying Assumption \ref{ass:1}, exogenous inflow array $\lambda = (\lambda_i^{(k)})_{i \in \mc{E},k \in \mc{K}}$, and control actions $R^{(k)}(t)$ and $\alpha(t)$, a Dynamical Multi-Commodity Flow Network (DMFN) is a dynamical system satisfying equations \eqref{eq:massCons}--\eqref{eq:4.5}.
\end{definition}

We then establish the well-posedness of the initial value problem, as already proven in \cite{Stab_D}, in the special case when $\alpha_i^{(k)} \equiv 1$.
\begin{lemma}\label{lemma:well_pos}
	For every initial condition $x(0)$ in $\mc{X}$, there exists a unique solution $x(t)$ of the DMFN at time $t \geq 0$.
\end{lemma}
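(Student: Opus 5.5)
The plan is to reduce the statement to the existence and uniqueness theorem for ODEs with a right-hand side that is piecewise continuous in time and Lipschitz in the state. Then we show that solutions starting in $\mc X$ remain in $\mc X$ and cannot blow up, so they extend to all $t\ge 0$. As in \cite{Stab_D}, where the case $\alpha_i^{(k)}\equiv 1$ is treated, we fix the piecewise continuous controls $R^{(k)}(t)$, $\alpha(t)$ and inflows $\lambda(t)$. On each interval where these are continuous, we study the vector field $F(t,x)$ defined by \eqref{eq:massCons}--\eqref{eq:4.5}. Solutions are then concatenated across the finitely many discontinuity points on $[0,T]$ (or on each bounded interval), using the endpoint value as the new initial condition.

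\textbf{Step 1 (regularity of $F$).} The maps $x\mapsto \alpha_i^{(k)}d_i^{(k)}(x_i^{(k)})$ and $x\mapsto s_j(\sum_k w_j^{(k)}x_j^{(k)})$ are $C^1$ by Assumption~\ref{ass:1}. Hence they are locally Lipschitz, uniformly in the bounded controls $\alpha\in[0,1]$ and $R\in[0,1]$. The delicate part is $\gamma_i$. From \eqref{eq:4.5} we can write
\[
\gamma_i=\min\Bigl\{1,\ \min_{j:R_{ij}^{(k)}>0}\frac{s_j(\cdot)}{D_j(x)}\Bigr\},\qquad D_j(x)=\sum_{k}\sum_{h}R_{hj}^{(k)}\alpha_h^{(k)}d_h^{(k)}(x_h^{(k)}),
\]
with the convention that the ratio equals $+\infty$ when $D_j=0$. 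A ratio of Lipschitz functions can fail to be Lipschitz near $D_j=0$. To avoid this, we work directly with the products $f_{ij}^{(k)}=\gamma_i R_{ij}^{(k)}\alpha_i^{(k)}d_i^{(k)}$ and use the FIFO structure: $f_{ij}^{(k)}$ equals a minimum over $j$ of terms of the form
\[
R_{ij}^{(k)}\alpha_i^{(k)}d_i^{(k)}\min\{1,s_j/D_j\}.
\]
Since $0\le \alpha_i^{(k)}d_i^{(k)}R_{ij}^{(k)}\le D_j$ whenever $R_{ij}^{(k)}>0$, each term has the form $a\min\{1,s/b\}$ with $0\le a\le b$, where $a,b,s$ are Lipschitz. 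Such a term is Lipschitz: it equals $a$ when $s\ge b$, and $as/b$ otherwise, where $a/b\in[0,1]$. In the same spirit as \cite{Stab_D}, this yields a local Lipschitz bound on $f$ in $x$ that is uniform in $t$.

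\textbf{Step 2 (invariance and global existence).} We check that $\mc X$ is forward invariant. If $x_i^{(k)}=0$, then $d_i^{(k)}(0)=0$, so the outflow vanishes and $\dot x_i^{(k)}\ge 0$. For $i\notin\mc E_k$, all flows of commodity $k$ into $i$ vanish because $R^{(k)}$ is supported on $\mc A_k$, and $\lambda_i^{(k)}=0$, so $x_i^{(k)}$ stays at zero. Extending $F$ Lipschitzly outside the orthant (for instance by evaluating at the projection onto $\R_+$) lets us apply a Nagumo-type argument. For global existence, summing \eqref{eq:massCons} over all $i$ and $k$ gives
\[
\frac{d}{dt}\sum x\le \sum\lambda,
\]
because internal flows cancel and off-ramp outflows are nonnegative. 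Since $x\ge 0$, the state is bounded on bounded time intervals, so no finite-time blow-up occurs. Local existence and uniqueness, as given by Picard--Lindel\"of, then extend to all $t\ge0$.

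\textbf{Main obstacle.} I expect Step 1 to be the main obstacle, specifically the Lipschitz continuity of $\gamma_i$ near states where some downstream demand $D_j$ vanishes. In addition, the index set $\{j: R_{ij}^{(k)}>0\}$ depends on $t$, but not on $x$, so it is harmless on each continuity interval once the controls are fixed. I would first try the bound $a\min\{1,s/b\}$ with $a\le b$ described above. If that proves insufficient, I would fall back on citing the argument of \cite{Stab_D} with demands replaced by $\alpha_i^{(k)}d_i^{(k)}$, which still satisfy all the properties used there for each fixed $\alpha$.
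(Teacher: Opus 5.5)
\textbf{Overall assessment.} Your route is the paper's: Picard--Lindel\"of for a right-hand side that is piecewise continuous in $t$ and Lipschitz in $x$. The paper simply asserts global Lipschitz continuity ``according to Assumption~\ref{ass:1}''. You instead try to prove it, and you replace global Lipschitz continuity by a local bound plus forward invariance of $\mc X$ and the mass balance $\frac{d}{dt}\sum x\le\sum\lambda$. Step~2 is sound and is a real improvement. The gap is in Step~1, exactly where you expected it.

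\textbf{The cross terms in Step 1.} Since $\gamma_i$ is a minimum over \emph{all} downstream cells $l$ with $R_{il}>0$, you have $f_{ij}^{(k)}=\min_l R_{ij}^{(k)}\alpha_i^{(k)}d_i^{(k)}\min\{1,s_l/D_l\}$. You use $j$ both as the target of the flow and as the minimization index, so your inequality $a\le b$ only covers the term $l=j$. For $l\neq j$ one has $a=R_{ij}^{(k)}\alpha_i^{(k)}d_i^{(k)}$ and $b=D_l$, and $a\le b$ fails in general: take $R_{ij}^{(k)}$ close to $1$, $R_{il}^{(k)}$ close to $0$, and nothing else feeding $l$. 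When $l$ is in the constraint set because commodity $k$ itself routes there, the correct estimate is $a\le (R_{ij}^{(k)}/R_{il}^{(k)})\,b$. This gives a Lipschitz constant of order $\max_l R_{ij}^{(k)}/R_{il}^{(k)}$. That is finite at each fixed $t$, but not uniform in $t$ if a positive turning ratio tends to $0$ inside a continuity interval of $R^{(k)}$. At such times the index set $\{l:R_{il}^{(k)}>0\}$ also changes, contrary to your assumption that it is constant there. You need an extra hypothesis, e.g.\ positive turning ratios bounded away from zero.

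\textbf{Shared $\gamma_i$.} If $\gamma_i$ is shared by all commodities in cell $i$, as \eqref{eq:fifo} is written, things are worse. Then $l$ may enter the constraint set only through another commodity $k'$, and $D_l\ge R_{il}^{(k')}\alpha_i^{(k')}d_i^{(k')}(x_i^{(k')})$ gives no control on $a/b$. Concretely:
\begin{enumerate}
\item[(a)] let commodity $k$ route $i\to j$ with ample supply at $j$;
\item[(b)] let commodity $k'$ route $i\to l$;
\item[(c)] let $l$ be fed only by $i$ and jammed, $s_l=0$, which is attainable with the supply of Example~\ref{ex:1}.
\end{enumerate}
At $x_i^{(k')}=0$ the constraint for $l$ is vacuous, and $f_{ij}^{(k)}=\alpha_i^{(k)}d_i^{(k)}(x_i^{(k)})$, which can be positive. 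For every $x_i^{(k')}>0$ with $\alpha_i^{(k')}d_i^{(k')}(x_i^{(k')})>0$, however, $\gamma_i=0$ and hence $f_{ij}^{(k)}=0$. So the right-hand side is not even continuous in $x$ there, and no Lipschitz estimate is available.

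\textbf{What is needed.} The paper's one-line proof does not address this either, so falling back on \cite{Stab_D} does not close the gap. To complete Step~1 you must do one of two things:
\begin{enumerate}
\item[(1)] use a commodity-wise $\gamma_i^{(k)}$ together with the ratio bound above; or
\item[(2)] assume supplies are bounded away from zero on the relevant states. Then $\min\{1,s_l/D_l\}=1$ whenever $D_l$ is small, and the ratio is Lipschitz elsewhere.
\end{enumerate}
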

\begin{proof}
	First, let $\dot{x} = g(t,x)$ and notice that $g(t,x)$ is piecewise continuous in $t$ (since the functions $R^{(k)}$ and $\alpha$ are piecewise continuous) and globally Lipschitz continuous in $x$, according to Assumption~\ref{ass:1}. Hence, from Picard-Lindelöf theorem~\cite{Khalil} it follows that every initial condition $x(0)$ there exists a unique solution of the DMFN at every time $t \geq 0$.
\end{proof}
\subsection{Optimal Multi-Commodity Dynamic Traffic Assignment}
In this section we are interested into optimizing the time-integral of a cost function $\varphi(x,z)$, which depends on the traffic volume $x$ and cell outflow $z$, over a finite time interval $[0,T]$, where $T$ denotes the time horizon. The following standard assumption is made:
\begin{assumption}\label{ass:cost_fleet}
	The cost function $\varphi(x,z)$ is convex and differentiable in $(x,z)$ and such that
	$\frac{\partial}{\partial x}\varphi\left(x,z\right) \geq 0$ , $\frac{\partial}{\partial z}\varphi\left(x,z\right) \leq 0$ , and
	$\varphi(0,0) = 0$, i.e., non-decreasing in $x$, non-increasing in $z$ and equal to $0$ when $(x,z) = (0,0)$.
\end{assumption}

With the introduction of the cost function, it is now possible to formally state the optimal control problem under investigation. 


\begin{problem}[Continuous time MC-DTA]\label{p:c}
	\be\label{eq:part_DTA}
	\ba{rcl}
	J(\alpha^*,R^*) = &\underset{\alpha(t) \in \Omega,R^{(k)}(t) \in \mc{R}^{(k)}}{\emph{minimize}}  & \int_0^T{\varphi(x(t),z(t))dt} \\
&\emph{subject to}  & \eqref{eq:massCons},\eqref{eq:zik}, \eqref{eq:fifo}, \eqref{eq:4.5}
	\ea
	\ee
\end{problem}

Note that this optimal control problem yields a non-convex formulation, mainly given by congestion handling at diverge junctions~\cite{Non-Conv}. Moreover, we will let $J(\alpha,R)$ denote the cost of a feasible solution of Problem~\ref{p:c}.

As a standard result, we prove the existence of the optimal control for Problem~\ref{p:c}.

\begin{proposition}\label{prop:filippov}
	Given a cost function satisfying Assumption~\ref{ass:cost_fleet}, for any initial condition $x^0$ in $\mc{X}$ the Problem~\ref{p:c} admits an optimal solution.
\end{proposition}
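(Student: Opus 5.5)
The plan is to use the direct method of optimal control in Filippov's form. We pass from the original controlled system to a differential inclusion, obtain an optimal trajectory of the inclusion by compactness, and then recover genuine controls by measurable selection. The label \texttt{prop:filippov} suggests this is the intended route.

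\emph{Setup and bounds.} Write the control as $u=(\alpha,R)$, taking values in $U:=\Omega\times\prod_{k}\mc R^{(k)}$, which is compact. The dynamics \eqref{eq:massCons}--\eqref{eq:4.5} become $\dot x=g(t,x,u)$.
\begin{itemize}
\item Every admissible $u$ produces a unique trajectory by Lemma~\ref{lemma:well_pos}.
\item Cell outflows are bounded by $\max d_i^{(k)}$ on bounded sets, and exogenous inflows are bounded on $[0,T]$.
\item Hence trajectories stay in a compact set $K\subset\mc X$ and have uniformly bounded derivatives.
\end{itemize}
The cost $\varphi(x,z)\ge$ some finite lower bound on this compact set, since $\varphi$ is continuous. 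Therefore $J^*:=\inf J$ is finite.

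\emph{Minimizing sequence and relaxation.} Take a minimizing sequence $u_n$ with trajectories $x_n$ and outflows $z_n$. By Arzel\`a--Ascoli, $x_n\to x^*$ uniformly along a subsequence. Since the $z_n$ are bounded in $L^\infty$, they converge weakly-* to some $z^*$. Consider the augmented velocity set
\[
F(t,x)=\{(g(t,x,u),\,\eta): u\in U,\ \eta\ge \varphi(x,z(x,u))\}.
\]
The natural tool is Filippov's existence theorem (Cesari's lower closure theorem). It needs $F$ to have convex, closed values and to be upper semicontinuous in $x$.

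\emph{Main obstacle: convexity of $F$.} The map $u\mapsto (g,z)$ is nonlinear in $u$ through $\gamma_i$ and the product $R\alpha$, so convexity of $F(t,x)$ is not automatic. I would change variables to the flows $f_{ij}^{(k)}$ and outflows $z_i^{(k)}$. For fixed $x$, the attainable set of these flows is described by linear constraints:
\begin{itemize}
\item $0\le z_i^{(k)}\le d_i^{(k)}(x_i^{(k)})$;
\item $f^{(k)}_{ij}\ge0$, supported on $\mc A_k$;
\item $\sum_j f_{ij}^{(k)}=z_i^{(k)}$;
\item $\sum_{h,k} f^{(k)}_{hj}\le s_j(\cdot)$, together with the FIFO proportionality.
\end{itemize}
FIFO imposes a common factor across outgoing links of a cell. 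I would show that the attainable flow set still equals a polytope-like set. The key point is that choosing $\alpha$ commodity-wise and $R$ freely allows any flow vector satisfying the linear constraints, with $\gamma_i=1$ achievable by shrinking $\alpha$. Let $Q(x)$ denote this set. It is convex and compact, and it is continuous in $x$ because $d,s$ are continuous. The dynamics are affine in $(f,z)$ and $\varphi$ is convex, so the epigraphical augmented set is convex.

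\emph{Recovering controls.} The limit $(x^*,z^*)$ then satisfies the inclusion. By lower semicontinuity of convex integral functionals under weak convergence, its cost is at most $J^*$. A measurable selection (Filippov's lemma) gives flows in $Q(x^*(t))$. We then read off $\alpha_i^{(k)}=z_i^{(k)}/d_i^{(k)}$ (arbitrary where $d=0$) and $R_{ij}^{(k)}=f_{ij}^{(k)}/z_i^{(k)}$, which realize them with $\gamma\equiv1$.

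The regularity class needs care. The recovered controls are only measurable, whereas the problem asks for piecewise continuous ones. Either the admissible class is read as measurable, which is the standard convention, or an additional approximation argument is required. The equivalence step establishing $Q(x)$ is where I expect most of the real work.
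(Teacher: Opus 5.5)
Your proposal is correct in substance and uses the same top-level tool as the paper, Filippov's existence theorem. However, it verifies the crucial convexity hypothesis by a genuinely different and sounder route.

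The paper applies Filippov directly to the original system. It takes $G(x)=\{(g(x,\alpha,R),\varphi(x,z))\}$ and calls its convexity immediate because $g$ is ``affine in $\alpha$ and $R$''. As written, that justification is not valid, for two reasons. First, $g$ contains the bilinear term $R_{ij}^{(k)}\alpha_i^{(k)}$ and the FIFO factor $\gamma_i(x,\alpha,R)$, which depends nonlinearly on both controls. Second, even for affine $g$, the set $\{(g,\varphi)\}$ need not be convex when $\varphi$ is nonlinear in $z$; one needs the epigraph $\eta\ge\varphi$ that you use.

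The convexity of the $\dot x$-velocity set is in fact true, and your change of variables is what proves it: at fixed $x$, the attainable set of $(f,z)$ is exactly the polytope $Q(x)$. One inclusion follows from the FIFO rule \eqref{eq:4.5}. It forces $\gamma_h\le s_j/D_j$ for every cell $h$ feeding $j$ with $D_j>0$, where $D_j$ is the total controlled demand towards $j$; summing over $h$ gives the supply constraint. This is what Proposition~\ref{prop:conv_relax} asserts. The other inclusion is the explicit choice \eqref{eq:alfa_sol}--\eqref{eq:R_sol} with $\gamma\equiv1$, as in Proposition~\ref{prop:relax}.

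In effect, you first obtain an optimum of the convex Problem~\ref{p:c_r} and then transfer it back through tightness. This reverses the paper's logical order, since the paper's Theorem deduces existence for the relaxed problem from this proposition. Your route costs a longer argument, but it actually closes the convexity step, which the paper's two-line citation does not.

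Smaller points to fix:
\begin{enumerate}
\item Boundedness of trajectories needs an a priori estimate, not just boundedness of outflows on bounded sets. For example, $\frac{d}{dt}\sum_{i,k}x_i^{(k)}\le\sum_{i,k}\lambda_i^{(k)}$ together with $x\ge0$ suffices.
\item You must pass to weak-* limits of the flows $f_n$ as well as of $z_n$, since inflows enter the dynamics. Once you do, no separate measurable-selection step is needed.
\item An approximation argument only shows that piecewise continuous and measurable controls have the same infimum. It does not produce a piecewise continuous minimizer, so the result should be stated for measurable controls. The paper's Filippov-based proof has the same limitation without saying so.
\end{enumerate}
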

\begin{proof}
	Let $\dot{x} = g(x,\alpha,R)$ and let $\tilde{x} = \int_0^T \varphi(x(t),z(t))dt$, so that the augmented dynamics read
	$$
		\dot{\hat{x}} = \begin{pmatrix}
			\dot{x} \\ \dot{\tilde{x}}
		\end{pmatrix} = \begin{pmatrix}
		g(t,x) \\ \varphi(x(t),z(t))
		\end{pmatrix}\;.
	$$
	We are then left to prove that the assumptions of Filippov's theorem~\cite{Liberzon} hold for our augmented system. First, as previously noted the sets $\Omega$ and $\mc{R}^{(k)}$ are compact, so that we are left to prove that the set
	$$
	G(x) = \left\{\begin{pmatrix}
		g(x,\alpha,R) \\
		\varphi(x(t),z(t))
		
	\end{pmatrix} \Bigg|\; \alpha \in \Omega\;, R^{(k)} \in \mc{R}^{k}\right\}
	$$
	is convex for every $x$ in $\mc{X}$. Since $g(x,\alpha,R)$ is affine in $\alpha$ and $R$ and the sets $\Omega$ and $\mc{R}$ are convex, this is immediate. Finally, the existence of a solution for every $t > 0$ is verified by Lemma~\ref{lemma:well_pos}. 
	Hence, the reachable set from $(x^0,\tilde{x}^0)$ is compact and thus admits a minimum.
\end{proof}

\section{Tight Convexification of the MC-DTA problem}\label{sec:control}
To tackle the non-convexity of the optimal control problem, we shall now introduce a relaxed formulation and proceed to prove that it is both convex and tight, as similarly done for the Freeway Network Control in \cite{FNC}. The key insight behind this relaxation is to shift the optimization variables from the control parameters $\alpha_i^{(k)}(t)$ and the turning ratios $R_{ij}^{(k)}(t)$ to the traffic volume variables $ \bar {x}^{(k)}(t) $ in $\R_+^{\mc E_k}$ and the traffic flow variables $\bar{f}^{(k)}(t)$ in $\R_+^{\mc A_k}$ and $\mu_i^{(k)}$ in $\R_+^{\mc{E}_k}$, such that $\mu_i^{(k)} = 0$ for every $i$ in $\mc{E}_k\setminus\mc{S}_k$. Regarding the traffic flow variables, $\bar{f}$ is the flow between cells within the network, while $\mu$ captures the flow that leaves the network through the off-ramps

The relaxation should be consistent our model, which is inspired from the fundamental diagram of traffic flows, so that these new optimization variables must satisfy the demand and supply constraints, i.e.,
\begin{enumerate}
	\item[(i)] demand constraints
	\be\label{eq:dmnd}
		\mu_i^{(k)}(t) + \sum\limits_{j \in \mc{E}_k}\bar{f}_{ij}^{(k)}(t) \leq d_i^{(k)}(\bar x_i^{(k)}(t))\;,
	\ee
	\item[(ii)] supply constraints
	\be\label{eq:supp}
		\sum\limits_{k \in \mc{K}}\sum\limits_{j \in \mc{E}_k}\bar f_{ji}^{(k)}(t) \leq s_i\biggl(\sum\limits_{k \in \mc{K}} w_i^{(k)}\bar{x}_i^{(k)}(t)\biggr)\;,
	\ee
\end{enumerate}
Let $\bar{u}(t)$ be the concatenation of vectors $ (\bar{f}(t),\mu(t))$, then it is possible to write the demand and supply jointly in compact vector form as $D\bar{u}(t) \leq h(\bar{x}(t))$, 
where $D \in \R_+^{r\times m}$ with $r = \sum_{k \in \mc{K}}|\mc{E}_k| + |\mc{E}|$ and $m = \sum_{k \in \mc{K}} |\mc{A}_k| + |\mc{S}_k|$. 

Once the new variables and constraints are introduced, we are ready to state the relaxation of the MC-DTA problem:
\begin{problem}[Relaxation of the continuous time MC-DTA]\label{p:c_r}
	\be\label{eq:DTA_R}
	 \ba{rcl}
	\bar{J}(\bar{x}^*,\bar{u}^*) = \underset{\bar{x}(t),\bar{u}(t)}{\emph{minimize}} \hspace*{-5em} & &\hspace*{4em} \int_0^T{\varphi(\bar{x},\bar{u})\;dt}\\
\emph{s.t. }  &  &\hspace{-1.5em}   \dot{\bar{x}}_i^{(k)} = \lambda_i^{(k)}(t) + \sum\limits_{j \in \mc{E}_k}\bar f_{ji}^{(k)}(t) - \sum\limits_{j\in\mc E_k}\bar{f}_{ij}^{(k)}(t) \\
& &\hspace{1.5em}- \mu_i^{(k)}(t)\;,\\
	& &\hspace{-1em}    D\bar{u}(t) - h(\bar{x}(t)) \leq 0, \\
	& & \hspace{-1em} \mu_i^{(k)}(t) = 0\;,\; \forall i \not\in \mc{S}\\
	& &\hspace{-1em} \bar x^{(k)}(0) = \bar x^{(k),0}\;,\\

	& &\hspace{-1em}  \forall k \in \mc{K}\;, \forall i \in \mc{E}_k\;.
	\ea\ee
\end{problem}

Similarly, we shall call $\bar{J}(\bar{x},\bar{u})$ the cost achieved by a feasible solution of Problem~\ref{p:c_r}. In the following proposition, we shall then prove that Problem \ref{p:c_r} is a convex relaxation of Problem~\ref{p:c}.

\begin{proposition}\label{prop:conv_relax}
   Problem \ref{p:c_r} is convex, and every feasible solution of Problem \ref{p:c} is also feasible for Problem \ref{p:c_r}.
\end{proposition}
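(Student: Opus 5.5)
The proof splits into the two claims of Proposition~\ref{prop:conv_relax}: convexity of Problem~\ref{p:c_r}, and feasibility of every solution of Problem~\ref{p:c} for Problem~\ref{p:c_r}.

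\textbf{Convexity.} I will argue that Problem~\ref{p:c_r} minimizes a convex functional over a convex set of trajectories $(\bar x,\bar u)$.
\begin{enumerate}
\item The objective $\int_0^T\varphi(\bar x,\bar u)\,dt$ is convex because $\varphi$ is convex by Assumption~\ref{ass:cost_fleet}, and integration over $[0,T]$ preserves convexity (pointwise convex combinations of trajectories).
\item The mass-conservation dynamics, the initial condition, and the constraints $\mu_i^{(k)}=0$ for $i\notin\mc S$ are all linear in $(\bar x,\bar u)$, so they define an affine set.
\item The remaining constraint $D\bar u-h(\bar x)\le0$ is handled componentwise. Each component of $h$ is either $d_i^{(k)}(\bar x_i^{(k)})$ or $s_i\bigl(\sum_k w_i^{(k)}\bar x_i^{(k)}\bigr)$. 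The first is concave by Assumption~\ref{ass:1}(ii). The second is concave as the composition of the concave $s_i$ (Assumption~\ref{ass:1}(i)) with an affine map.
\item Hence every component of $D\bar u-h(\bar x)$ is convex, so its sublevel set at $0$ is convex. Intersecting over $t$ and components keeps convexity.
\end{enumerate}
One subtlety is that for on-ramps $i\in\mc O$ Assumption~\ref{ass:1}(i) does not impose concavity of $s_i$. I will note that no flow $\bar f_{ji}$ enters an on-ramp, since $\sigma_i=w$ means $(j,i)\notin\mc A$. The left-hand side of \eqref{eq:supp} is then $0$, so the supply rows for on-ramps can be dropped, or are trivially satisfied given $s_i\ge0$.

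\textbf{Feasibility.} Take a feasible $(\alpha,R)$ for Problem~\ref{p:c} with trajectory $x(t)$ from Lemma~\ref{lemma:well_pos}. Define $\bar x=x$, $\bar f_{ij}^{(k)}=f_{ij}^{(k)}(x)$ from \eqref{eq:fifo}, and $\mu_i^{(k)}=\alpha_i^{(k)}d_i^{(k)}(x_i^{(k)})$ for $i\in\mc S_k$, zero otherwise.
\begin{enumerate}
\item \emph{Dynamics.} By \eqref{eq:zik}, $z_i^{(k)}=\sum_j\bar f_{ij}^{(k)}+\mu_i^{(k)}$ in both cases, so \eqref{eq:massCons} coincides with the relaxed dynamics.
\item \emph{Demand.} For $i\notin\mc S_k$, using \eqref{eq:sub}, $\sum_j\bar f_{ij}^{(k)}=\gamma_i\alpha_i^{(k)}d_i^{(k)}\sum_jR_{ij}^{(k)}\le d_i^{(k)}$ since $\gamma_i,\alpha_i^{(k)}\le1$. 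For $i\in\mc S_k$ the row sum of $R$ is $0$, so only $\mu_i^{(k)}\le d_i^{(k)}$ remains, which holds.
\item \emph{Supply.} For cell $j$, $\sum_k\sum_h\bar f_{hj}^{(k)}=\sum_k\sum_h\gamma_hR_{hj}^{(k)}\alpha_h^{(k)}d_h^{(k)}$. Each nonzero term has $R_{hj}^{(k)}>0$, so $j$ appears in the max in \eqref{eq:4.5} for upstream cell $h$. That gives $\gamma_h\sum_k\sum_{h'}R_{h'j}^{(k)}\alpha_{h'}^{(k)}d_{h'}^{(k)}\le s_j$.
\end{enumerate}

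The main obstacle is the supply step, because different upstream cells $h$ have different $\gamma_h$. I will bound each term by replacing $\gamma_h$ with $\gamma^\ast:=\max_{h}\gamma_h$ over the upstream cells $h$ with a positive term. Let $h^\ast$ attain $\gamma^\ast$. Then
$$\sum_k\sum_h\gamma_hR_{hj}^{(k)}\alpha_h^{(k)}d_h^{(k)}\le\gamma_{h^\ast}\sum_k\sum_hR_{hj}^{(k)}\alpha_h^{(k)}d_h^{(k)}\le s_j,$$
where the last inequality applies \eqref{eq:4.5} for $h^\ast$, which is valid because $R_{h^\ast j}^{(k)}>0$ for some $k$. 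All constraints hold pointwise in $t$, so the pair $(\bar x,\bar u)$ is feasible for Problem~\ref{p:c_r}, with cost equal to $J(\alpha,R)$ when $\varphi$ is evaluated on $(x,z)$.
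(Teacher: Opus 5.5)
Your proof is correct and follows the same route as the paper: convexity comes from the convex cost, the linear mass-conservation equality, and the concavity of $d_i^{(k)}$ and of $s_i$ composed with the affine aggregation; feasibility comes from checking that the FIFO-induced flows obey \eqref{eq:dmnd} and \eqref{eq:supp}. Your version is more complete than the paper's, which only asserts that these constraints are ``inevitably satisfied'': your $\max_h\gamma_h$ bound for the supply constraint, and your observation that on-ramp supply rows are vacuous (so Assumption~\ref{ass:1}(i) need not cover them), fill steps the paper leaves out.
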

\begin{proof}
	Firstly, notice that for every choice of control parameters $\alpha_i^{(k)}(t)$ in $[0,1]$ and turning ratios $R_{ij}^{(k)}(t)$ the demand \eqref{eq:dmnd} and supply constraints \eqref{eq:supp} are inevitably satisfied. Moreover, every feasible solution of \eqref{eq:part_DTA} is also feasible for the relaxed problem \eqref{eq:DTA_R}.
	Then, we are left with proving the convexity of \eqref{eq:DTA_R}. Notice that this would mean that if both
	$\left( \bar x^0(t), \allowbreak\bar u^0(t)\right)$ and $\left( \bar x^1(t),\bar u^1(t)\right)$ satisfy the constraints, then for every $\beta$ in $\left[0,1\right]$, also $\left( \bar x^\beta(t),\bar u^\beta(t)\right)$ does, where $\bar x^\beta = (1-\beta)\bar x^0 + \beta \bar x^1$ and $\bar u^\beta = (1-\beta)\bar u^0 + \beta \bar u^1$. This implies that
    \begin{equation}\label{eq:cond}
        	\int_0^T{\varphi(\bar x^\beta(t))dt} \leq  (1-\beta)\int_0^T{\varphi(\bar x^0(t))dt} + \beta\int_0^T{\varphi(\bar x^1	(t))dt}
    \end{equation}
	Since $\varphi(\bar x,\bar u)$ is convex by assumption, then inequality \eqref{eq:cond} is always satisfied. Moreover, the concavity of the demand and supply functions implies the convexity of the ~constraints $\eqref{eq:dmnd}$ and $\eqref{eq:supp}$, while constraint \eqref{eq:zik} is a linear equality.
\end{proof}
It remains to establish the tightness of the above convex relaxation.
In particular, we will prove that for every solution of the relaxed convex optimal control problem \eqref{eq:DTA_R}, there exists a corresponding selection of control parameters $\alpha_i^{(k)}$ and turning ratios $R_{ij}^{(k)}$ such that equations \eqref{eq:massCons}, \eqref{eq:zik}, \eqref{eq:fifo}, and~\eqref{eq:4.5} are satisfied. To this end, consider a solution $(\bar x,\bar u)$  and let the control parameters $\alpha^{(k)}$ and the dynamical routing $R^{(k)}$ be given by
\be\label{eq:alfa_sol}
\alpha_i^{(k)}(t) = \frac{\mu_i^{(k)}(t)+\sum_{j \in \mc{E}_k}\bar{f}_{ij}^{(k)}(t)}{d_i^{(k)}(\bar x_i^{(k)}(t))}, \qquad \forall i \in \mc{E}_k\;,
\ee
\be\label{eq:R_sol}
R_{ij}^{(k)}(t) = \begin{cases}
	\frac{\bar f_{ij}^{(k)}(t)}{\sum_{j \in \mc{E}}\bar{f}_{ij}^{(k)}(t)} & \forall (i,j) \in \mc{A}_k\\
	0 & \forall (i,j) \in \mc{E} \times \mc{E} \setminus \mc{A}_k
\end{cases}
\;,
\ee
with the convention that $\alpha_i^{(k)} = 1$ if $\sum_{j \in \mc{E}}\bar{f}_{ij}^{(k)} = d_i^{(k)}(\bar x_i^{(k)}) = 0$. If $\sum_{j \in \mc{E}}\bar f_{ij}^{(k)} = 0$, then the turning ratios are supposed uniform over the set of outgoing cells, i.e. $R_{ij}^{(k)} = |\left\{l \in \mc{E}_k:(i,l) \in \mc{A}_k\right\}|^{-1}$.

\begin{proposition}\label{prop:relax}
	For every feasible solution $(\bar x(t),\bar u(t))$  of the convex MC-DTA \eqref{p:c_r}, it is possible to set the entries of $\alpha(t)$ and $R(t)$ as defined in \eqref{eq:alfa_sol} and \eqref{eq:R_sol}, respectively. Then,
	\begin{enumerate}
		\item[(i)] $(\alpha,R)$ is a solution of problem \eqref{p:c};
		\item[(ii)] $J(\alpha,R) \leq \bar{J}(\bar x,\bar u)$.
	\end{enumerate}
\end{proposition}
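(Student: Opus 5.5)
The plan is to show that the choice \eqref{eq:alfa_sol}--\eqref{eq:R_sol} makes the closed-loop DMFN reproduce the relaxed trajectory exactly. The strategy is to check that $\bar x(t)$ solves the controlled dynamics \eqref{eq:massCons}--\eqref{eq:4.5} under these controls; by uniqueness (Lemma~\ref{lemma:well_pos}) it is then \emph{the} trajectory of the DMFN, which gives (i). Claim (ii) will follow by comparing outflows and using monotonicity of $\varphi$.

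\textbf{Step 1 (admissibility).} First verify that $\alpha(t)\in\Omega$ and $R^{(k)}(t)\in\mc R^{(k)}$.
\begin{itemize}
\item Nonnegativity of $\bar u$ together with the demand constraint \eqref{eq:dmnd} gives $0\le\alpha_i^{(k)}\le 1$; the convention $\alpha_i^{(k)}=1$ covers the case $0/0$.
\item \eqref{eq:R_sol} has nonnegative entries summing to one on $\mc E_k\setminus\mc S_k$, and vanishes off $\mc A_k$.
\item For off-ramps, $\mu$ plays the role of the exit flow, so we use $z_i^{(k)}=\alpha_i^{(k)}d_i^{(k)}=\mu_i^{(k)}$, with row sum $0$ as required by \eqref{eq:sub}.
\end{itemize}
Piecewise continuity of the controls is inherited from that of $(\bar x,\bar u)$. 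Where a denominator vanishes, one takes a measurable or piecewise-continuous selection; this is a minor technicality.

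\textbf{Step 2 (FIFO factor equals one).} This is the main obstacle. With the above controls, $R_{ij}^{(k)}\alpha_i^{(k)}d_i^{(k)}(\bar x_i^{(k)})=\bar f_{ij}^{(k)}$ for every $(i,j)\in\mc A_k$. The uniform-routing convention is harmless here, because it occurs only when the total outflow is zero, in which case both sides vanish.

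Hence, for every $j$, the aggregate quantity on the left of \eqref{eq:4.5} equals $\sum_k\sum_h \bar f_{hj}^{(k)}$. By the supply constraint \eqref{eq:supp} this is at most $s_j\bigl(\sum_k w_j^{(k)}\bar x_j^{(k)}\bigr)$. So $\tau=1$ is feasible in \eqref{eq:4.5}, and therefore $\gamma_i=1$. Consequently $f_{ij}^{(k)}(\bar x)=\bar f_{ij}^{(k)}$ and $z_i^{(k)}(\bar x)=\sum_j\bar f_{ij}^{(k)}+\mu_i^{(k)}$.

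The delicate points to check are two:
\begin{itemize}
\item \eqref{eq:4.5} takes the max over all downstream $j$ of $i$ with $R_{ij}^{(k)}>0$, and each such $j$ must satisfy its supply bound. This holds because \eqref{eq:supp} is imposed at every cell.
\item Indices in the multi-commodity sums must be consistent.
\end{itemize}

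\textbf{Step 3 (trajectory coincides).} Substituting the identities from Step 2 into \eqref{eq:massCons} yields exactly the relaxed dynamics of Problem~\ref{p:c_r}. Thus $\bar x$ is a solution of the DMFN with initial condition $\bar x^0$. By Lemma~\ref{lemma:well_pos} it is the unique one, i.e.\ $x(t)=\bar x(t)$, which proves (i).

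\textbf{Step 4 (costs).} Since the state and outflows coincide, $\varphi(x,z)=\varphi(\bar x,\bar u)$ pointwise, with $z$ identified with the relaxed outflows. Integrating gives $J(\alpha,R)=\bar J(\bar x,\bar u)$, and in particular $J(\alpha,R)\le\bar J(\bar x,\bar u)$.

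If the cost in the relaxed problem is instead read through a possibly smaller outflow aggregation, equality may fail. In that case the inequality still follows from $z\ge$ the relaxed outflow and $\partial\varphi/\partial z\le0$ (Assumption~\ref{ass:cost_fleet}).

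Combining this with Proposition~\ref{prop:conv_relax} yields tightness of the relaxation.
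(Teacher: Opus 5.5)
Your proposal is correct and follows the same route as the paper's proof: the choice \eqref{eq:alfa_sol}--\eqref{eq:R_sol} gives $R_{ij}^{(k)}\alpha_i^{(k)}d_i^{(k)}(\bar x_i^{(k)})=\bar f_{ij}^{(k)}$, the supply constraint \eqref{eq:supp} then forces $\gamma_i=1$, and so the closed-loop flows and outflows coincide with $(\bar f,\mu)$ and the two costs agree. Your version is somewhat more careful than the paper's, because you invoke the uniqueness in Lemma~\ref{lemma:well_pos} explicitly to identify the DMFN trajectory with $\bar x$, and you check that the uniform-routing convention only adds cells to \eqref{eq:4.5} that already satisfy their supply bound; the paper leaves both points implicit.
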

\begin{proof}
		Let $(\bar x(t),\bar u(t))$ be a feasible solution. It follows from the demand constraints~\eqref{eq:dmnd} that the outflow is less than or equal to the demand, i.e.,  $\mu_i^{(k)}+\sum_{j \in \mc{E}}\bar f_{ij}^{(k)} \leq \alpha_i^{(k)}d_i^{(k)}(\bar x_i^{(k)})$. Hence, when selecting the control parameters as in \eqref{eq:alfa_sol}, the outflow of every cell corresponds exactly to the demand. Formally,
	$\mu_i^{(k)}+\sum_{j \in \mc{E}}\bar f_{ij}^{(k)} = \alpha_i^{(k)}d_i^{(k)}(\bar x_i^{(k)})$ for all $ i \in \mc{E}_k$. Moreover, for every non off-ramp cell, it follows that $\bar f_{ij}^{(k)} = R_{ij}^{(k)}\sum_{j \in \mc{E}}\bar f_{ij}^{(k)} = R_{ij}^{(k)}\alpha_i^{(k)}d_i^{(k)}(\bar x_i^{(k)})$, for all $ (i,j) \in \mc{A}_k$,
	which implies that for every cell $j$ in $\mc{E}_k \setminus \mc{S}_k$,
	\ben
	\sum\limits_{k \in \mc{K}}\sum\limits_{j \in \mc{E}_k} {R_{ij}^{(k)}\alpha_i^{(k)}d_i^{(k)}(\bar x_i^{(k)})} \leq s_j \biggl(\sum\limits_{k \in \mc{K}}w_j^{(k)}\bar{x}_j^{(k)}\biggr)
	\een
	The last inequality follows from the supply constraint \eqref{eq:supp}, which implies that there is no congestion, i.e., $\gamma_i^{F} = 1$, for every $k$ in $\mc{K}$ and $i$ in $\mc{E}_k$. After that it is easy to verify that the choice of routing matrices $R^{(k)}(t)$ is in $\mc{R}^{(k)}$ for every $k$ and $t$. Hence, $x = \bar{x}$, $f = \bar{f}$ for every cell, and $z = \mu$ for every off-ramp cell. Moreover, the equations $\eqref{eq:massCons},\eqref{eq:zik},\eqref{eq:fifo},\eqref{eq:4.5}$, are satisfied so that $(\alpha,R)$ is a solution of the original Problem~\ref{p:c}. To prove (ii), notice that setting $\alpha$ and $R$ as in $\eqref{eq:alfa_sol},\eqref{eq:R_sol}$ induces the same values of traffic volumes and flows to the original problem, which implies $J(\alpha,R) \leq \bar{J}(\bar{x},\bar{u})$.
\end{proof}

\begin{theorem}
	Suppose the cost function satisfies Assumption~\ref{ass:cost_fleet} and for every initial condition $x^0$ in $\mc{X}$ set $\alpha^*$ and $R^*$ as in \eqref{eq:alfa_sol} and \eqref{eq:R_sol}, respectively. Then the following statements are equivalent.
	\begin{enumerate}
		\item[(i)] $(\alpha^*,R^*)$ is an optimal solution of Problem \ref{p:c};
		\item[(ii)] $(\bar x^*,\bar u^*)$ is an optimal solution of Problem \ref{p:c_r};
		\item[(iii)] there exists $\chi$, ~$\eta : \R_+ \rightarrow \R_+^r$ such that $(\bar x^*,\bar u^*)$ satisfy the PMP conditions
	\ben
	\ba{rcl}
	H(x,u,\chi) & = & -\varphi(x,z) + \langle \chi,\dot{x} \rangle\\
	\eta^\top\left(Du^* - h(x^*)\right) & = & 0\\
	\dot{\chi}_i^{(k)} & = & -\frac{\partial \varphi(x^*,u^*)}{\partial x_i^{(k)}}  - \frac{\partial h(x^*)}{\partial x_i^{(k)}}^\top \eta \\
	\frac{\partial}{\partial f}H(x^*,u^*,\chi) & = & D^\top\eta \\
	\max\limits_{f} H(x^*,u,\chi) & =& H(x^*,u^*,\chi)\\
	\chi_i^{(k)}(t) & \neq & 0\;, \forall 0 \leq t < T\\
	\chi_i^{(k)}(T) &  = & 0
	\ea
	\een
	\end{enumerate}
\end{theorem}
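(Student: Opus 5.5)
The plan is to prove the cycle (i)$\Leftrightarrow$(ii) first, using only Propositions~\ref{prop:filippov}, \ref{prop:conv_relax} and~\ref{prop:relax}. After that, (ii)$\Leftrightarrow$(iii) follows from the convexity of Problem~\ref{p:c_r}, which makes the Pontryagin conditions with state constraints both necessary and sufficient.

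For (ii)$\Rightarrow$(i), let $(\bar x^*,\bar u^*)$ be optimal for Problem~\ref{p:c_r}, and define $(\alpha^*,R^*)$ by \eqref{eq:alfa_sol}--\eqref{eq:R_sol}. Proposition~\ref{prop:relax} gives that $(\alpha^*,R^*)$ is feasible for Problem~\ref{p:c}, reproduces the same trajectory, and satisfies $J(\alpha^*,R^*)\le \bar J(\bar x^*,\bar u^*)$. Now take any feasible $(\alpha,R)$ for Problem~\ref{p:c}. By Proposition~\ref{prop:conv_relax}, its induced trajectory and flows $(x,f,z)$ are feasible for Problem~\ref{p:c_r}, with the same cost. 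Hence $\bar J(\bar x^*,\bar u^*)\le J(\alpha,R)$, and chaining the two inequalities gives $J(\alpha^*,R^*)\le J(\alpha,R)$.

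For (i)$\Rightarrow$(ii), I would argue by contradiction. Suppose some feasible $(\hat x,\hat u)$ has $\bar J(\hat x,\hat u)<\bar J(\bar x^*,\bar u^*)$. By Proposition~\ref{prop:relax}, it induces controls $(\hat\alpha,\hat R)$ with $J(\hat\alpha,\hat R)\le \bar J(\hat x,\hat u)$. One also needs $\bar J(\bar x^*,\bar u^*)= J(\alpha^*,R^*)$, which holds because the induced trajectory coincides with $(\bar x^*,\bar u^*)$. Together these give $J(\hat\alpha,\hat R)<J(\alpha^*,R^*)$, contradicting (i). As a byproduct, the two optimal values coincide, so the relaxation is tight. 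Existence of minimizers on both sides comes from Proposition~\ref{prop:filippov} together with this equality of values.

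For (ii)$\Leftrightarrow$(iii), I would treat Problem~\ref{p:c_r} as an optimal control problem with state $\bar x$, control $\bar u$, dynamics affine in $(\bar x,\bar u)$, and mixed state--control inequality constraints $D\bar u-h(\bar x)\le 0$, which are convex because $h$ is built from the concave $d_i^{(k)}$ and $s_i$. I would attach the costate $\chi$ to the dynamics and a multiplier $\eta\ge0$ to the mixed constraints. The necessity direction (ii)$\Rightarrow$(iii) then uses the Pontryagin principle for mixed constraints. Stationarity of the Lagrangian $H-\eta^\top(Du-h(x))$ in $u$ gives $\partial_u H=D^\top\eta$. The costate equation and complementary slackness follow as stated, and the free final state gives $\chi(T)=0$. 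The regularity condition I would verify is that the rows of $D$ active at any time are linearly independent, or at least positively independent, since each $\bar f_{ij}^{(k)}$ and $\mu_i^{(k)}$ appears in exactly one demand row and at most one supply row. The sufficiency direction (iii)$\Rightarrow$(ii) is a Mangasarian-type argument. Because $-\varphi$ is concave and the constraint set is convex, the Hamiltonian $H$ is concave in $(x,u)$. For any feasible $(x,u)$, integrating $\frac{d}{dt}\langle\chi,x-x^*\rangle$ and using the adjoint equation, the maximum condition, complementary slackness and $\chi(T)=0$ yields $\bar J(x,u)\ge\bar J(x^*,u^*)$.

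The main obstacle is the necessity half of (ii)$\Rightarrow$(iii). It requires a constraint qualification for mixed constraints, and it is also where the normality of the multiplier and the condition $\chi\ne0$ on $[0,T)$ must be justified. I would first try to obtain these through a full-rank argument on $D$ restricted to the active constraints. If that fails, I would fall back on a discretize-then-KKT argument, using Slater's condition for the convex program obtained by time discretization, and then pass to the limit.
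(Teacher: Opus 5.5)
Your argument for (i)$\Leftrightarrow$(ii) is the same as the paper's: you sandwich the two costs using Propositions~\ref{prop:conv_relax} and~\ref{prop:relax}. Your Mangasarian-type sufficiency argument for (iii)$\Rightarrow$(ii) also matches. The gap is in the constraint qualification for necessity.

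Linear independence of the active rows of $D$ is false in general. The column structure you invoke is exactly what lets it fail. Suppose cell $j$ is the only successor of cell $i$, $i$ is the only predecessor of $j$, and only commodity $k$ uses them. Then the demand row of $(i,k)$ and the supply row of $j$ are both the unit vector of $\bar f_{ij}^{(k)}$. These two active rows coincide whenever $\bar f_{ij}^{(k)}=d_i^{(k)}(\bar x_i^{(k)})=s_j(\cdot)$, which is the capacity-saturating regime optimal solutions routinely reach.

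The missing idea is to use the sign structure of $D$ rather than its sparsity. All entries of $D$ are nonnegative. Once the vacuous supply rows of on-ramps (which contain no control variable) are discarded, no row is zero, so $D\mathbb{1}>0$. Hence no nontrivial nonnegative combination of rows vanishes. Taking $v=-\mathbb{1}/\sqrt{m}$ gives $\mathcal{B}_\delta^r\subseteq D\mathcal{B}_1^m+D\bar u^*-h(\bar x^*)+\R_+^r$ with $\delta=\min_r(D\mathbb{1})_r/\sqrt{m}$, independent of $t$. This is the regularity condition of the mixed-constraint maximum principle. The observation is the paper's entire necessity argument, and it makes your discretize-then-KKT fallback unnecessary.

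One caveat applies to the paper and to your plan alike, since both check only $D$. If the sign constraints $\bar u\ge 0$ are also treated as mixed constraints, positive independence fails whenever some $\bar x_i^{(k)}=0$. In that case the demand row of $(i,k)$ plus the gradients of its active sign constraints sum to zero.

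Second, you list the requirement $\chi_i^{(k)}(t)\neq 0$ on $[0,T)$ as something to be justified. No constraint qualification will produce it, and the paper's proof does not derive it either: it stops after the regularity check and the appeal to convexity. It is in fact false in general:
\begin{itemize}
\item $\varphi\equiv 0$ satisfies Assumption~\ref{ass:cost_fleet}, so every feasible pair of Problem~\ref{p:c_r} is optimal.
\item Take a feasible trajectory on which every demand and supply constraint holds strictly. This is easy with positive initial volumes and small positive flows over a short horizon.
\item Along it, complementary slackness with $\eta\ge 0$ forces $\eta\equiv 0$.
\item The adjoint equation then gives $\dot\chi\equiv 0$, and $\chi(T)=0$ yields $\chi\equiv 0$.
\end{itemize}
So (ii)$\Rightarrow$(iii) can only be established with that clause removed, or replaced by the usual nontriviality of the multipliers. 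You should not spend effort trying to derive it.
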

\begin{proof}
    We first focus on proving that (i) and (ii) are equivalent. Let $(\alpha^*,R^*)$ be the optimal control solution for the original problem. By Proposition \ref{prop:conv_relax} this is also feasible for the relaxed problem, such that $\inf \bar{J}(\bar x,\bar u) \leq J(\alpha^*,R^*)$.  Then from Proposition \ref{prop:relax} we know that we can map any solution $(\bar x, \bar u)$ of the relaxed problem to a solution $(\alpha,R)$ of the original one, such that $J(\alpha,R) \leq \bar{J}(\bar x,\bar u)$. By taking the infimum on both sides we get $J(\alpha^*,R^*) \leq \inf \bar{J}(\bar x,\bar u)$, which combined with the previous inequality gives the identity $J(\alpha^*,R^*) = \inf J(\bar x,\bar u) = J(\bar x^*,\bar u^*)$. Hence, the relaxed problem also admits an optimal solution $(\bar x^*,\bar u^*)$.
	
	We are now left to prove that (ii) is equivalent to (iii). This reduces to proving that the PMP conditions are both necessary and sufficient for optimality. As far as necessity is concerned, since the demand and supply functions are $\mc{C}^1$ we easily satisfy the continuity assumptions of the Pontryagin Maximum Principle with mixed constraints~\cite{Aruty2010}. However, we still need to verify regularity condition in \cite[Definition 2]{Aruty2010}. Hence, let $\mc{B}_a^n$ be the ball of dimension $n$ and radius $a$ with center at the origin, we need to prove that there exists $\delta > 0$ such that  $\mc{B}_\delta^r \subseteq D\mc{B}_1^m + Du^* - h(x^*) + \R_+^r$. To verify this, it is sufficient to prove that $\text{Im}(D) \cap \R_{++}^r \neq \emptyset$. Indeed, if this is the case then $\exists v \in (\R_{++}^r \cap D\mc{B}_1^m)$, so that it is sufficient to take $\delta = \min_i v_i > 0$. When $D \in \R_+^{r \times m}$, as is our case, it is sufficient to verify that $D\mathbb{1} > 0$, namely that every constraint involves at least one control variable, which is true in our setting.
	To prove sufficiency we exploit the convexity of our problem. Given this property, sufficiency follows easily by noting that the PMP conditions are the same provided in \cite[Corollary 1.(c)]{Manga1996}.
\end{proof}

\section{Examples}\label{sec:example}
\subsection{An Analytical Example}
In this section we shall then exploit the PMP conditions just proposed in a simple setting to understand how the optimal controller behaves. To this end, consider the diverge junction composed by cells $1$, $2$, and $3$ in Figure \ref{fig:multi_od}, and assume that in this example cells $2$ and $3$ are off-ramps.

\begin{figure}
	\centering

	\begin{tikzpicture}[thick]
		
		\node[draw,circle] (a)  at (0,0) {};
		\node[draw,circle] (b) at (1.5,0) {};
		\node[draw,circle] (c) at (3,0) {};
		\node[draw,circle] (d)  at (0,-1) {};
		\node[draw,circle] (e) at (3,-1) {};
		\node[draw,circle] (f) at (4.5,-1) {};
		
		\draw[->] (-1,0) --node[above]{$1$} (a);
		\draw[->] (a) --node[above] {$2$} (b) ;
		\draw[->] (b) --node[above] {$6$} (c) ;
		\draw[->] (c) --node[above] {$10$} (4.5,0);
		\draw[->] (a) --node[left] {$3$} (d);        
		\draw[->] (-1,-1) --node[below]{$4$} (d);
		\draw[->] (d) --node[below]{$5$} (e);
		\draw[->] (e) --node[below]{$8$} (f);
		\draw[->] (e) --node[right]{$7$} (c);
		\draw[->] (f) --node[below]{$9$} (5.5,-1);
		
	\end{tikzpicture}

	\caption{Synthetic network 2 on-ramps and 2 off-ramps}

	\label{fig:multi_od}

\end{figure}
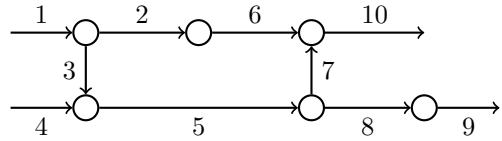

    
The network is shared by two commodities, namely $a$ ~and $b$, but commodity $b$ is restricted to cells $1$ and $2$, while commodity $a$ can travel freely through the network. For simplicity, we shall consider linear demand functions, i.e., $d_i^{(k)}(x_i^{(k)}) = x_i^{(k)}$ for every $i$, and affine supply functions, i.e., $s_i(\sum_{k \in \mc{K}}x_i^{(k)}) = \gamma- x_i^{(a)} - x_i^{(b)}$. 
Let $\chi \in \R^5$ denote the adjoint state and focus on maximizing the following Hamiltonian, with objective function $\varphi(x) = \sum_{i \in \mc{E}} x_i^{(a)} +
 x_i^{(b)}$.
\begin{multline*}
    H(x,f,\chi) =  -\sum_{i \in \mc{E}}\bigl(x_i^{(a)} + x_i^{(b)}\bigr) + \chi_1\bigl(-f_{12}^{(a)} - f_{13}^{(a)}\bigr) \\ 
+  \chi_2\bigl( -f_{12}^{(b)}\bigr) + \chi_3\bigl(f_{12}^{(a)} - x_2^{(a)}  \bigr) 
 +  \chi_4\bigl(f_{12}^{(b)} - x_2^{(b)} \bigr) 
 \\ + \chi_5\bigl(f_{13}^{(a)} - x_3^{(a)}\bigr) 
 =  c(x,\chi) + \kappa_{12}^{(a)}f_{12}^{(a)} + \kappa_{12}^{(b)}f_{12}^{(b)} + \kappa_{13}^{(a)}f_{13}^{(a)} 
\end{multline*}
where $\kappa_{12}^{(a)} = \chi_3 - \chi_1$, $\kappa_{12}^{(b)} = \chi_4 - \chi_2$, and $\kappa_{13}^{(a)} = \chi_5 - \chi_1$.

However, this optimization is constrained by the demand and supply functions. Hence, let $\eta : \R_+ \rightarrow \R_+^4$ denote the Lagrange multipliers associated with the constraints, so that
$$
\ba{rcl}
& & \eta_1\bigl(f_{12}^{(a)} + f_{13}^{(a)} - x_1^{(a)}\bigr)=0\;,\; \eta_2\bigl(f_{12}^{(b)} - x_1^{(b)}\bigr)=0 \\
& & \eta_3\bigl(f_{12}^{(a)} + f_{12}^{(b)} - \gamma + x_2^{(a)} + x_2^{(b)}\bigr) = 0 \;, \\
& & \eta_4\bigl(f_{13}^{(a)} - \gamma + x_3^{(a)} \bigr) = 0 \;,
\ea
$$
Moreover, the adjoint dynamics read
\begin{multline*}
    \dot{\chi}_1 = 1 - \eta_1\;, \quad   \dot{\chi}_1 = 1 - \eta_1\;, \quad  \dot{\chi}_3 = 1 + \chi_3 + \eta_3\;, \\ 
    \dot{\chi}_4 = 1 + \chi_4 + \eta_3\;, \quad \dot{\chi}_5 = 1 + \chi_5 + \eta_4\;, 
\end{multline*}
with transversality condition $(\chi(T))_{i = 1}^5 = 0$ and $(\chi_i(t))_{i=1}^5 \neq 0$ for every $0 \leq t < T$. Notice that this implies that $\chi_3<0$, $\chi_4 < 0$, and $\chi_5 <0$, because if they were positive their derivative would also be positive, which would prevent them from satisfying the transversality condition.  Lastly, we consider the stationarity condition, i.e.,
\begin{align*}
\eta_1 + \eta_3 & =  \max \{0,-\chi_1 + \chi_3\} = \max \{0,\kappa_{12}^{(a)}\}\;,\\
 \eta_1 + \eta_4 & =  \max \{0,-\chi_1 + \chi_5\}= \max \{0,\kappa_{13}^{(a)}\}\;,\\
 \eta_2 + \eta_3 & =  \max \{0,-\chi_2 + \chi_4\} = \max \{0,\kappa_{12}^{(b)}\}\;,
\end{align*}
where the $\max$ operator comes from the non-negativity of the Lagrange multipliers. This implies that $\chi_1 < 0$ and $\chi_2 < 0$ since otherwise the multipliers would be identically zero.

We then proceed to study the sign of $\kappa_{12}^{(a)}$. To this end, consider $\dot{\kappa}_{12}^{(a)} = \dot{\chi}_3 - \dot{\chi}_1 = \chi_3 + \eta_1 + \eta_3 \leq \max\{0, \kappa_{12}^{(a)}\}$, where the last inequality follows from the negativity of $\chi_3$ and the condition of stationarity. Combining this with $\kappa_{12}^{(a)}(T) = 0$ we conclude that $\kappa_{12}^{(a)} \geq 0$, and with a similar argument $\kappa_{12}^{(b)} \geq 0$, and $\kappa_{13}^{(a)} \geq 0$.

 Since the Hamiltonian is linear with respect to $f$, the maximum will be attained when $f$ itself is maximized, considering the demand and supply constraints. In particular, it always holds that $f_{13}^{(a)} = \min\{x_1^{(a)},\gamma-x_3^{(a)}\}$, so we shall consider each case separately:
 \begin{enumerate}
     \item[(i)] if  $f_{13}^{(a)} = x_1^{(a)}$, then $f_{12}^{(a)}$ = 0, because the demand of commodity $a$ has already been fully satisfied, and $f_{12}^{(b)} = \min\{x_1^{(b)},\gamma-x_2^{(a)}-x_2^{(b)}\}$. 
     \item[(ii)] if $f_{13}^{(a)} = \gamma - x_3^{(a)}$ and $2\gamma - x_2^{(a)} - x_3^{(a)} \geq x_1^{(a)} + x_1^{(b)}$, i.e., the sum of the supplies is bigger than the sum of the demands, then $f_{12}^{(a)} = x_1^{(a)} -f_{13}^{(a)}$ and $f_{12}^{(b)} = x_1^{(b)}$.
      \item[(iii)] if $f_{13}^{(a)} = \gamma - x_3^{(a)}$ and $2\gamma - x_2^{(a)} - x_3^{(a)} < x_1^{(a)} + x_1^{(b)}$, i.e., the sum of the supplies is smaller than the sum of the demands, then the only non-zero Lagrange multipliers are $\eta_3$ and $\eta_4$. Hence, $\kappa_{12}^{(a)} - \kappa_{12}^{(b)} = \eta_3 - \eta_3 = 0$, which implies that any control such that $f_{12}^{(a)} + f_{12}^{(b)} = \gamma - x_2^{(a)} - x_3^{(a)}$ is optimal. Hence, one optimal strategy is to block one of the commodities.
 \end{enumerate}

\subsection{A Numerical Example}
We conclude this section with an example on a synthetic network. Consider the traffic network in Figure \ref{fig:multi_od} shared by the two commodities $a$ and $b$, where $\mc{O}^{(a)} = \{1\}$, $\mc{O}^{(b)} = \{1,4\}$, $\mc{S}^{(a)} = \{9,10\}$, and $\mc{S}^{(b)} = \{10\}$, and cell $3$ is not accessible by commodity $b$. Moreover, set the demand and supply functions such that $d_i^{(a)}(x_i^{(a)}) = 4x_i^{(a)}$, $d_i^{(b)}(x_i^{(b)}) = 2x_i^{(b)}$, and $s_i(4x_i^{(a)} + 2x_i^{(b)}) = 4 -4x_i^{(a)} - 2x_i^{(b)}$. Furthermore, the exogenous inflows are set such that $\lambda_1^{(a)} = \lambda_1^{(b)} = \lambda_4^{(b)} = 1$, the initial conditions are $x_i^{(k)} = 0.5$ for every $i$ and $k$ and the chosen cost function is $\sum_{i \in \mc{E}}x_i^{(a)} + x_i^{(b)}$. The trajectories of the simulation is shown in Figure~\ref{fig:results}, where it is clear that the optimal control action reduces the cost function. Moreover, in cell $1$, the simulation indicates that it is optimal to block one of the flows, which is not unexpected as the optimality of similar behavior has been observed in part (iii) of the analytical example above.. In fact, at the start, the supply constraint of cell $1$ is met with equality, and the flow of commodity $b$ gets blocked until the constraint is inactive.

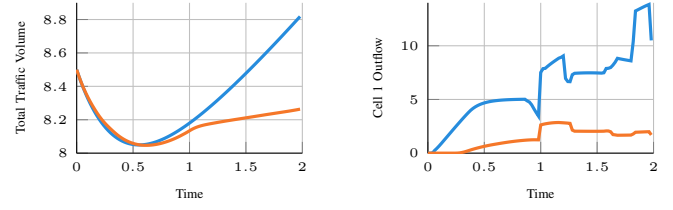
\begin{figure}
	\vspace{0.5cm}
	\centering
	\begin{minipage}{0.45\columnwidth}
		\centering
%
%
\definecolor{mycolor1}{rgb}{0.14900,0.54900,0.86600}%
\definecolor{mycolor2}{rgb}{0.96000,0.46600,0.16000}%
\definecolor{mycolor3}{rgb}{0.85098,0.85098,0.85098}%
\definecolor{mycolor4}{rgb}{0.07059,0.07059,0.07059}%
\begin{tikzpicture}

\begin{axis}[%
width=3cm,
height=2cm,
at={(1.288in,0.657in)},
scale only axis,
xmin=0,
xmax=2,
xlabel={Time},
ymin=8,
ymax=8.9,
xlabel near ticks,
ylabel near ticks,
label style={font=\tiny},
tick label style={font=\tiny},
ylabel={Total Traffic Volume},
axis x line*=bottom,
axis y line*=left,
xmajorgrids,
ymajorgrids
]
\addplot [color=mycolor1, line width=1.2pt, forget plot]
  table[row sep=crcr]{%
0	8.5\\
0.0199999999999996	8.46\\
0.0399999999999991	8.42266666666667\\
0.0600000000000005	8.38782222222222\\
0.0800000000000001	8.35531277343527\\
0.0999999999999996	8.32500381438489\\
0.119999999999999	8.29677654871621\\
0.140000000000001	8.27052489843571\\
0.16	8.24615308452268\\
0.18	8.22357368127149\\
0.199999999999999	8.2027060611054\\
0.220000000000001	8.18347515960401\\
0.24	8.16581050175848\\
0.26	8.14964544015895\\
0.279999999999999	8.1349165640953\\
0.300000000000001	8.12156324558266\\
0.32	8.10952729426895\\
0.34	8.09875269818936\\
0.359999999999999	8.08918543153521\\
0.380000000000001	8.0807733141188\\
0.4	8.07346591014492\\
0.42	8.06721445633197\\
0.44	8.06197181143851\\
0.460000000000001	8.05769242091086\\
0.48	8.05433229172976\\
0.5	8.05184897364809\\
0.52	8.05020154391748\\
0.540000000000001	8.04935059333487\\
0.56	8.04925821202865\\
0.58	8.04988797387426\\
0.6	8.05120491880087\\
0.619999999999999	8.05317553254125\\
0.640000000000001	8.0557677236022\\
0.66	8.05895079740335\\
0.68	8.06269542765942\\
0.699999999999999	8.06697362517314\\
0.720000000000001	8.07175870426939\\
0.74	8.07702524714298\\
0.76	8.08274906641602\\
0.779999999999999	8.08890716621118\\
0.800000000000001	8.09547770204711\\
0.82	8.10243993985396\\
0.84	8.10977421439374\\
0.859999999999999	8.11746188735242\\
0.880000000000001	8.12548530535036\\
0.9	8.13382775809627\\
0.92	8.14247343688707\\
0.960000000000001	8.16061550057089\\
1	8.17980151969387\\
1.04	8.19993340295087\\
1.08	8.22092376348089\\
1.12	8.24269481258517\\
1.16	8.26517733363787\\
1.2	8.28830973860672\\
1.24	8.31203720715308\\
1.28	8.33631090641184\\
1.32	8.36108728817975\\
1.36	8.38632745928659\\
1.4	8.41199662030897\\
1.46	8.45123733334571\\
1.52	8.49128139387615\\
1.58	8.53204956068071\\
1.64	8.5734734139434\\
1.7	8.61549344115423\\
1.76	8.6580574848077\\
1.82	8.7011194890724\\
1.9	8.75923993061086\\
1.98	8.81808775114928\\
};
\addplot [color=mycolor2, line width=1.2pt, forget plot]
  table[row sep=crcr]{%
0	8.5\\
0.0199999999999996	8.46\\
0.0399999999999991	8.42400000010028\\
0.0600000000000005	8.39132800023436\\
0.0800000000000001	8.36144128039142\\
0.0999999999999996	8.33390238777438\\
0.119999999999999	8.30835936130226\\
0.140000000000001	8.28452945989678\\
0.16	8.26218578245518\\
0.18	8.24114627950219\\
0.199999999999999	8.22136586424184\\
0.220000000000001	8.20344639250449\\
0.24	8.18717418865676\\
0.26	8.17237288599021\\
0.279999999999999	8.15868379126948\\
0.300000000000001	8.14556517744705\\
0.32	8.13300851346034\\
0.34	8.12094752478614\\
0.359999999999999	8.10963692322269\\
0.380000000000001	8.09942541010537\\
0.4	8.09026959642713\\
0.42	8.08209853690306\\
0.44	8.07484632728561\\
0.460000000000001	8.06845768716281\\
0.48	8.06299155186561\\
0.5	8.05839421039605\\
0.52	8.05461659666948\\
0.540000000000001	8.05161392196222\\
0.56	8.04934530036135\\
0.58	8.04781408536845\\
0.6	8.04700005036745\\
0.619999999999999	8.04681781774484\\
0.640000000000001	8.04723605184185\\
0.66	8.04822583233051\\
0.68	8.04976039808571\\
0.699999999999999	8.05181492612154\\
0.720000000000001	8.05436634070703\\
0.74	8.05739315218617\\
0.76	8.06087532499483\\
0.779999999999999	8.06479417431559\\
0.800000000000001	8.06913229102582\\
0.82	8.07387350426266\\
0.84	8.07902630767615\\
0.859999999999999	8.0845803455756\\
0.880000000000001	8.09055304234708\\
0.9	8.09694737610219\\
0.92	8.10373389949709\\
0.960000000000001	8.11817141716351\\
0.98	8.12585137969008\\
1.02	8.14191679087227\\
1.04	8.14880294315685\\
1.06	8.15455639222357\\
1.08	8.15938940251289\\
1.1	8.16350387983321\\
1.12	8.16706122454048\\
1.14	8.17018998557775\\
1.18	8.17554754712553\\
1.26	8.18529724606538\\
1.32	8.19220086175562\\
1.4	8.20080735218249\\
1.56	8.21772033055212\\
1.68	8.23072105810046\\
1.88	8.25146699398332\\
1.94	8.25841270434611\\
1.98	8.2633302862193\\
};
\end{axis}

\end{tikzpicture}%
	\end{minipage}
	\hfill
	\begin{minipage}{0.45\columnwidth}
		\centering
%
%
\definecolor{mycolor1}{rgb}{0.14900,0.54900,0.86600}%
\definecolor{mycolor2}{rgb}{0.96000,0.46600,0.16000}%
\definecolor{mycolor3}{rgb}{0.85098,0.85098,0.85098}%
\definecolor{mycolor4}{rgb}{0.07059,0.07059,0.07059}%
\begin{tikzpicture}

\begin{axis}[%
width=3cm,
height=2cm,
at={(1.288in,0.657in)},
scale only axis,
xmin=0,
xmax=2,
xlabel near ticks,
ylabel near ticks,
label style={font=\tiny},
tick label style={font=\tiny},
xlabel={Time},
ymin=0,
ymax=14,
ylabel={Cell 1 Outflow},
axis x line*=bottom,
axis y line*=left,
xmajorgrids,
ymajorgrids,
]
\addplot [color=mycolor1, line width=1.2pt, forget plot]
  table[row sep=crcr]{%
0	0\\
0.0399999999999991	0\\
0.0600000000000005	0.252458236019958\\
0.0800000000000001	0.450460119087772\\
0.0999999999999996	0.673315568304774\\
0.119999999999999	0.910534499296665\\
0.16	1.40288949325513\\
0.220000000000001	2.14343027557288\\
0.26	2.6297131275972\\
0.300000000000001	3.10864722099096\\
0.32	3.34213795383964\\
0.34	3.56971642274915\\
0.359999999999999	3.79050247179895\\
0.380000000000001	3.99698300330625\\
0.4	4.1681049743984\\
0.42	4.31023945950875\\
0.44	4.42851533215472\\
0.460000000000001	4.52708898314973\\
0.48	4.60934778726897\\
0.5	4.67806547082804\\
0.52	4.73552240658236\\
0.540000000000001	4.78359981546735\\
0.56	4.82385400975691\\
0.58	4.85757560657945\\
0.6	4.88583714981407\\
0.619999999999999	4.90953141406734\\
0.640000000000001	4.92940259749874\\
0.66	4.94607188478813\\
0.68	4.9600582394751\\
0.699999999999999	4.97179558692778\\
0.720000000000001	4.98164709382907\\
0.74	4.98991682026586\\
0.76	4.99685945779184\\
0.779999999999999	5.00268851967427\\
0.800000000000001	5.00758298988617\\
0.82	5.01169293669779\\
0.84	5.01514426587324\\
0.859999999999999	5.01804231011326\\
0.880000000000001	4.86981439941081\\
0.9	4.75881277437082\\
0.92	4.49723078120681\\
0.94	4.12657303730373\\
0.960000000000001	3.7797253965409\\
0.98	3.45644291331616\\
1	7.5131446324029\\
1.02	7.8694821538191\\
1.04	7.8968318223291\\
1.06	8.07958604736714\\
1.08	8.24837411077188\\
1.1	8.40636068936998\\
1.12	8.55656014077197\\
1.14	8.70169652868529\\
1.16	8.8441863598541\\
1.18	8.92362545476505\\
1.2	9.05575829284455\\
1.22	6.95382697724918\\
1.24	6.66976244718517\\
1.26	6.65683377972609\\
1.28	7.30668913149818\\
1.3	7.43603039041327\\
1.32	7.448491350036\\
1.34	7.45883957255688\\
1.36	7.46711786837872\\
1.38	7.47336508301842\\
1.4	7.47760932222826\\
1.42	7.47989531378318\\
1.44	7.48023720406234\\
1.46	7.47867404466467\\
1.48	7.47522469381355\\
1.5	7.46991524663834\\
1.52	7.46276977154696\\
1.54	7.45380169437156\\
1.56	7.48818299670325\\
1.58	7.88601044125219\\
1.6	7.90486053280532\\
1.62	7.97652212001336\\
1.64	8.1955948083501\\
1.66	8.47273694750233\\
1.68	8.83147901188629\\
1.7	8.78794913877664\\
1.72	8.74604010964227\\
1.74	8.7062547754255\\
1.76	8.66885767900112\\
1.78	8.6332834371218\\
1.8	8.59579238261576\\
1.82	10.3943595489296\\
1.84	13.2425901210649\\
1.86	13.3470471293951\\
1.9	13.5610103879804\\
1.92	13.6646321099285\\
1.94	13.760835000032\\
1.96	13.8495041462398\\
1.98	10.5021178913419\\
};
\addplot [color=mycolor2, line width=1.2pt, forget plot]
  table[row sep=crcr]{%
0	0\\
0.24	4.22080717044082e-08\\
0.26	0.0064041051053505\\
0.28	0.0230695328774084\\
0.3	0.0572434216100706\\
0.32	0.104996498938104\\
0.34	0.164187041534427\\
0.36	0.233446866001435\\
0.38	0.30147336798674\\
0.4	0.365487456226991\\
0.42	0.425694487732987\\
0.44	0.482303010290612\\
0.46	0.535570383220464\\
0.48	0.586619649757384\\
0.5	0.635238637495276\\
0.52	0.681261115633045\\
0.54	0.724558617634581\\
0.56	0.765033176775995\\
0.58	0.803154673625187\\
0.64	0.91037716792246\\
0.66	0.944849874148542\\
0.68	0.978135469328206\\
0.7	1.01001513774141\\
0.72	1.04032721290909\\
0.74	1.06895698110652\\
0.76	1.09582819653374\\
0.78	1.12089602906198\\
0.8	1.14414120974881\\
0.82	1.16556516267236\\
0.84	1.18518595644367\\
0.86	1.20303493611448\\
0.88	1.21915287740019\\
0.9	1.23152383396209\\
0.92	1.24084825294953\\
0.94	1.2452332518061\\
0.96	1.24373178870643\\
0.98	1.23943830748131\\
1	2.62299057005475\\
1.02	2.68471493370226\\
1.04	2.72894633882497\\
1.06	2.76605323388264\\
1.08	2.79668838912174\\
1.1	2.82082641568823\\
1.12	2.83847229928344\\
1.14	2.84963939357685\\
1.16	2.85433417038735\\
1.18	2.84079105201137\\
1.2	2.82821375950339\\
1.22	2.81163885994756\\
1.24	2.79572870117684\\
1.26	2.77277944174737\\
1.28	2.14771968335602\\
1.3	2.04626721085072\\
1.38	2.03921059628042\\
1.42	2.03661683176588\\
1.44	2.03579581713926\\
1.46	2.03538917505579\\
1.48	2.03547070699618\\
1.5	2.03611761094612\\
1.52	2.03741019234861\\
1.54	2.0394309160762\\
1.56	2.04465918033129\\
1.58	2.06905638896377\\
1.6	2.06589190414779\\
1.62	1.99080769250405\\
1.64	1.73057374187026\\
1.66	1.7023260491438\\
1.68	1.66821479325093\\
1.72	1.67270884564604\\
1.74	1.67533163061861\\
1.76	1.67838648082779\\
1.8	1.68490841908761\\
1.82	1.78719580373764\\
1.84	1.9473380756804\\
1.86	1.95569125974336\\
1.92	1.98301555975763\\
1.94	1.99080187829263\\
1.96	1.99751248688611\\
1.98	1.69753053587215\\
};
\end{axis}

\end{tikzpicture}%
	\end{minipage}
	\caption{Left: Total traffic volume over time, in blue uncontrolled dynamics and in red the optimal control one. Right: Cell 2 Outflow over time: in blue commodity $a$ and in red commodity $b$.}
	\label{fig:results}
\end{figure}

\section{Conclusions}
In this paper we leveraged a dynamical multi-commodity flow network model to formulate a tight convex relaxation of the Multi-Commodity Dynamic Traffic Assignment problem, which can be solved using available convex optimization solvers. Future research should aim at developing a distributed algorithm to solve the problem and also account for implementation issues such as discretized control actions. 
\bibliographystyle{IEEEtran}
\bibliography{references}
		
\end{document}